\newtheorem{theorem}{Theorem}
\newtheorem{proposition}[theorem]{Proposition}
\begin{document}

\begin{center}
\title{A uniformly convergent Series for $\zeta(s)$ and closed Formulas, that include Catalan Numbers} 
\author{Robert J. Betts}
\maketitle
\emph{Graduate Department of Mathematics, University of Massachusetts Lowell, One University Avenue, Lowell, Massachusetts 01854 Robert\_Betts@alum.umb.edu}
\end{center}
\begin{abstract}
There exists an infinite series of ratios by which one can derive the Riemann zeta function $\zeta(s)$ from Catalan numbers and central binomial coefficients which appear in the terms of the series. While admittedly the derivation is not deep it does indicate some combinatorial aspect to the Riemann zeta function. But we actually do find also four additional new closed formulas, which include a formula by which one can compute $\zeta(s)$ for a countably infinite number of discrete positive values for $s$ where the formula contains Catalan numbers not in infinite series. The Riemann zeta function has applications in physics, such as in computations related to the Casimir effect. Our result indicates a link between the Riemann zeta function, combinatorics, Catalan numbers, the central binomial coefficient and the content of a hypersphere, under certain conditions.\footnote{\textbf{Mathematics Subject Classification} (2000): Primary 05A10; Secondary 11M36.}\footnote{\emph{Keywords}: Bernoulli number, Catalan number, central binomial coefficient, homeomorphism, Hopf fibration, principal value, Riemann zeta function.} 
\end{abstract}

\section{Catalan Numbers and $\zeta(s)$}
There exists a combinatorial relationship between Catalan numbers, the central binomial coefficient and the Riemann zeta function. By using Catalan numbers $C_{n}$ and central binomial coefficients ${2n \choose n}$, A combinatorial substitution derives $\zeta(s)$ as a uniformly convergent infinite series of analytic functions on $\mathbb{C}$ for \(|s| > 1\) that contains $C_{n}$ and ${2n \choose n}$ (Proposition~\ref{prop1}, Section 3, this paper). Each Catalan number also can be expressed in terms of $\zeta(s)$ (Proposition 2, Section 3) but also in closed form (See Eq. (21), Section 3.1). This shows the actual relevance Catalan numbers and the central binomial coefficient have in other areas besides combinatorial number theory. We also find a connection between Catalan numbers, the Riemann zeta function and the content of a hypersphere in even-numbered Euclidean spaces of dimensions divisible by four (See Section 3). 

\indent Important applications in mathematical physics have been found for $\zeta(s)$, for one example as a pseudodifferential operator
\begin{eqnarray}
s \rightarrow &\frac{1}{2} + i\Box,\nonumber \\
\zeta(s) \rightarrow &\zeta(\frac{1}{2} + i\Box),
\end{eqnarray}
(where ``$\Box$" is the d'Alembertian operator). It is used to investigate various cosmological models in string theory~\cite{Arefeva} (page 2),~\cite{Fulman},~\cite{Kowalski},~\cite{Peterson},~\cite{Schwabl},~\cite{Terras}. 
\section{Catalan numbers and their generating function}
The \emph{Catalan numbers} for positive integers $n$,
\begin{equation}
C_{n} = \frac{{2n \choose n}}{n + 1}
\end{equation}
appear in integer sequence number A000108 in N. J. A. Sloane's Online Encyclopedia of Integer Sequences~\cite{Sloane}. As an infinite sequence of positive integers they can be represented as 
$$
\left\{\frac{{2n \choose n}}{n + 1}\right\}_{n = 1}^{\infty}.
$$
The generating function for Catalan numbers is
\begin{equation}
\frac{1 - \sqrt{1 - 4x}}{2x} = \sum_{n = 0}^{\infty} \! C_{n}x^{n}.
\end{equation}
On the complex plane $\mathbb{C}$, $\Gamma(z)$ denotes the Gamma function. For all complex $z$ such that \(\mathcal{R}e(z) > 0\),
$$
\Gamma(z) = \int_{0}^{\infty}t^{z - 1}e^{-t}dt,
$$
$$
\Gamma(2z) = \int_{0}^{\infty}t^{2z - 1}e^{-t}dt.
$$
For \(\mathcal{R}e(z) > 0\) the function $\Gamma(z)$ is analytic inside all domains on $\mathbb{C}$ where \(|z| \gg 1\). There is an analytic continuation
\begin{eqnarray}
 &\Gamma(z + 1) = z\Gamma(z),\nonumber \\
 &\Gamma(2z + 1) = 2z\Gamma(2z),\nonumber
\end{eqnarray}
as \(|z| \rightarrow \infty\). So with \(\mathcal{R}e(z) > 0\) and with $|z|$ large, one way to compute an asymptotic approximation to 
$$
C_{n} = \frac{{2n \choose n}}{n + 1}
$$
would be to start from first principles, meaning by the brute force computation of~\cite{Dettman} (Compare Section 4.10, pages 191--192) 
$$
\frac{\Gamma(2z + 1)}{(z + 1)[\Gamma(z + 1)]^{2}}
$$
$$
= \frac{\int_{0}^{\infty}t^{2z}e^{-t}dt}{(z + 1)(\int_{0}^{\infty}t^{z}e^{-t}dt)^{2}}
$$
in all domains $D$ where \(\mathcal{R}e(z) > 0\) and where $|z|$ is large, letting \(t = 2sz\) for variable $s$, expand both the numerator and the denominator (~\cite{Dettman}, Example 10.3.1), after which one divides the expanded numerator by the expanded denominator, a formidable computation to say the least! However the various derivations for Stirling's Formula in the literature do all the necessary work for us. 
\section{Deriving $\zeta(s)$ through the Use of $C_{n}$, ${2n \choose n}$ in a Series of complex--valued Ratios}
In this Section we find an infinite series containing Catalan numbers through which one can derive the values for the Riemann zeta function $\zeta(s)$. 

\noindent
\begin{proposition} 
Let \(s \in \mathbb{C}\). Then
\begin{eqnarray}
\zeta(s) & = & \sum_{n = 1}^{\infty}\frac{1}{n^{s}} \\
         & = & \left(\frac{C_{0}}{1}\right)^{s} + \sum_{n = 1}^{\infty}\left(\frac{C_{n}}{{2n \choose n}}\right)^{s},
\end{eqnarray}
\emph{where we define \(C_{0} = 1\) as the Catalan number for \(n = 0\).} \\
\label{prop1}
\end{proposition}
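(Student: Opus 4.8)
The plan is to collapse the right-hand side of~(5) to the Dirichlet series defining $\zeta(s)$ by one algebraic simplification followed by an index shift. First I would apply the defining relation $C_{n} = {2n \choose n}/(n+1)$ from Equation~(2): dividing both sides by the central binomial coefficient gives
$$\frac{C_{n}}{{2n \choose n}} = \frac{1}{n+1}, \qquad n \ge 1,$$
so that $\bigl(C_{n}/{2n \choose n}\bigr)^{s} = (n+1)^{-s}$. The leading term behaves the same way under the stipulated convention $C_{0} = 1$, since $\bigl(C_{0}/1\bigr)^{s} = 1^{s} = 1 = 1^{-s}$.

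Next I would substitute these identities into the right-hand side of~(5), obtaining
$$\left(\frac{C_{0}}{1}\right)^{s} + \sum_{n = 1}^{\infty}\left(\frac{C_{n}}{{2n \choose n}}\right)^{s} = \frac{1}{1^{s}} + \sum_{n = 1}^{\infty}\frac{1}{(n+1)^{s}},$$
and then reindex the sum by $m = n+1$, which absorbs the isolated first term and produces $\sum_{m = 1}^{\infty} m^{-s}$, i.e.\ $\zeta(s)$ in the form~(4). That is the entire content of the identity: the proposition is essentially the single observation $C_{n}/{2n \choose n} = 1/(n+1)$.

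The one place deserving care is the analytic bookkeeping. The term-by-term rewriting and the reindexing are legitimate on the half-plane $\mathcal{R}e(s) > 1$, where the Dirichlet series~(4) converges absolutely and uniformly on compact subsets, hence represents an analytic function there; I would therefore state the domain of validity explicitly on that basis rather than literally for all $s \in \mathbb{C}$, and note that for other $s$ the symbol $\zeta(s)$ is to be read in the sense of its analytic continuation, the series representation itself holding only in the region of convergence. I do not anticipate any genuine obstacle here — the ``hard part'' is merely being precise about the region on which the displayed series equality actually holds and justifying the rearrangement by absolute convergence.
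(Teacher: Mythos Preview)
Your proposal is correct and matches the paper's own proof almost exactly: both reduce the identity to the single observation $C_{n}/\binom{2n}{n}=1/(n+1)$ together with the index shift $\sum_{n\ge 1}(n+1)^{-s}=\zeta(s)-1$. Your added remark restricting the series equality to $\mathcal{R}e(s)>1$ is in fact more careful than the paper, which leaves that point implicit.
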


\begin{proof}
Using the definition for Catalan numbers we derive
\begin{equation}
\frac{{2n \choose n}}{n + 1} = C_{n} 
\end{equation}
$$
\Longrightarrow \frac{1}{n + 1} = \frac{C_{n}}{{2n \choose n}}.
$$
Then 
\begin{eqnarray}
\zeta(s) & = & \sum_{n = 1}^{\infty}\frac{1}{n^{s}} = \frac{1}{1^{s}} + \sum_{n = 1}^{\infty}\frac{1}{(n + 1)^{s}} \\
         & \equiv & \left(\frac{C_{0}}{1}\right)^{s} + \sum_{n = 1}^{\infty}\left(\frac{C_{n}}{{2n \choose n}}\right)^{s},
\end{eqnarray}
where \(C_{0} = \frac{{2(0) \choose 0}}{0 + 1} = 1\) when \(n = 0\).\\ 
\end{proof}

\indent Admittedly  Equations (7)--(8) are not closed form expressions. This is derived independently of any intrinsic property of Catalan numbers. This however does not mean that suitable closed formulas do not exist. (See Section 3.1). Due to this one could feel inclined to argue that to substitute the quotient
$$
\frac{C_{n}}{{2n \choose n}},
$$
for
$$
\frac{1}{n + 1},
$$
in computations has little practical advantage or consequence. There are three counterarguments to this reasoning. First, the very definition for Catalan numbers is
$$
C_{n} = \frac{{2n \choose n}}{n + 1}.
$$
Certainly very few would contend that to perform computations with the ratio on the right hand side is inconsequential, since this is just the $n^{th}$ Catalan number. Otherwise how could one know how to compute the $n^{th}$ Catalan number if either tables of Catalan numbers or else computer access to the OEIS is not readily available to the researcher? Second, although it certainly is true that in the field $\mathbb{Q}$,
$$
\frac{C_{n}}{{2n \choose n}} = \frac{1}{n + 1},
$$
we still have that \(C_{n} \not = 1\) and \({2n \choose n} \not = n + 1\). Also and for mere sake of comparison, the expression for Wallis's formula~\cite{Abramowitz} (See Eq. 6.1.49, Chapter 6, page 258) does not appear at all in lowest terms anymore than does our ratio $\frac{C_{n}}{{2n \choose n}}$. Therefore the infinite series of ratios in Eq. (8) is, in its own right, a uniformly convergent series of analytic functions on $\mathbb{C}$ when \(|s| > 1\), where of course for sake of convenience and if we choose to (e. g., for ease of computation) we may substitute $\frac{1}{n + 1}$ for each term $\frac{C_{n}}{{2n \choose n}}$. Third, whereas it is true that
$$
\frac{C_{n}}{{2n \choose n}} = \frac{1}{n + 1}
$$
always for the two rational numbers on both sides of the equal sign, it is not true always that on $\mathbb{C}$,
$$
\left(\frac{C_{n}}{{2n \choose n}}\right)^{s_{1}} = \left(\frac{1}{n + 1}\right)^{s_{2}},\: s_{1}, s_{2} \in \mathbb{C},
$$
particularly even when \(s_{1} = s_{2}\) holds. In fact we have on the complex plane, a result
$$
exp\left(s_{1} \log \left(\frac{C_{n}}{{2n \choose n}}\right) - s_{2} \log \left(\frac{1}{n + 1}\right)\right) = 1,\: s_{1}, s_{2} \in \mathbb{C}
$$
$$
\Leftrightarrow s_{1}\log\left(\frac{C_{n}}{{2n \choose n}}\right) = s_{2}\log\left(\frac{1}{n + 1}\right) + 2m\pi i, 
$$
$$
m = 0, \: \pm 1, \: \pm 2, \ldots
$$
This means if the two analytic functions when expressed on $\mathbb{C}$ as \(exp(s_{1} \log(\frac{C_{n}}{{2n \choose n}}))\) and as \(exp(s_{2} \log\left(\frac{1}{n + 1}\right))\) are not on the same Riemann sheet, they will not always be equal even when \(s_{1} = s_{2}\). Therefore when it is required we can consider for \(s \in \mathbb{C}\), the principal values on both sides of Eq. (7). What is more significant is that Proposition~\ref{prop1} actually indicates the Riemann zeta function \emph{has some combinatorial description} (We show this more forthrightly, independently of Proposition 1, in Section 3.1). It can be derived by an infinite series of ratio terms with Catalan number $C_{n}$ in the numerator of each term, central binomial coefficient ${2n \choose n}$ in the denominator of each term, \(n = 1, 2, 3, \ldots\) and with each ratio term raised to the complex power $s$. Each term then is an element of $\mathbb{C}$. Although one might be inclined to think there is no intrinsic property of Catalan numbers allowing us to derive $\zeta(s)$ from them, we show in the following Subsection that this actually is not the case. We in fact derive as well an unexpected result, namely that there are a countably infinite number of positive integer values of $s$ for which $\zeta(s)$ and Catalan numbers are related to the content $V_{m}$ of a hypersphere of even dimension where this Euclidean dimension is \(m \equiv 0 \: mod \: 4\). There do exist two closed form expressions by which one can derive the Riemann zeta funtion from using Catalan numbers and vice versa for a countably infinite number of values of $s$ (Section 3.1). These closed formulas we shall find without the use of a uniformly convergent infinite series such as in Proposition 1 (See Eq. (7)). Certainly there is practical advantage in using these closed form expressions rather than the result in Eq. (7). Moreover there are several practical consequences with these closed formulas for both pure and applied mathematics, as we will show (See Section 5 and Section 4).

\indent When \(s = 2\) Leonard Euler (~\cite{Ireland}, page 229) proved the result:
\begin{equation}
\zeta(2) = \frac{\pi^{2}}{6}.
\end{equation}
Since it is so straightforward we need not work it out here, but as an exercise the reader actually can convince himself or herself (for example through Euler-MacLaurin summation~\cite{Abramowitz}. See Eq. 23.1.30, page 806) that, when \(s = 2\),
\begin{equation}
\lim_{n \rightarrow \infty} \left(\frac{1}{1^{2}} + \sum_{n = 1}^{\infty}\left(\frac{C_{n}}{{2n \choose n}}\right)^{2}\right) 
\end{equation}
$$
= \frac{\pi^{2}}{6}.
$$
In fact when we approximate with only sixteen terms (the following computations were done using PARI Calculator Version 2.3.1, GNU General Public License), we get the following (Compare the entries in Table 3, when \(s = 1\).):
$$
\frac{1}{1^{2}} + \sum_{n = 1}^{15} \! \left(\frac{C_{n}}{{2n \choose n}}\right)^{2}
$$
\begin{eqnarray}
=  & 1 + \left(\frac{C_{1}}{{2 \choose 1}}\right)^{2} +  \left(\frac{C_{2}}{{4 \choose 2}}\right)^{2}    +  \left(\frac{C_{3}}{{6 \choose 3}}\right)^{2}    +  \left(\frac{C_{4}}{{8 \choose 4}}\right)^{2}    +  \left(\frac{C_{5}}{{10 \choose 5}}\right)^{2} \nonumber \\
+  & \left(\frac{C_{6}}{{12 \choose 6}}\right)^{2}    +  \left(\frac{C_{7}}{{14 \choose 7}}\right)^{2}   +  \left(\frac{C_{8}}{{16 \choose 8}}\right)^{2}   +  \left(\frac{C_{9}}{{18 \choose 9}}\right)^{2}   +  \left(\frac{C_{10}}{{20 \choose 10}}\right)^{2} \nonumber \\
+  & \left(\frac{C_{11}}{{22 \choose 11}}\right)^{2}  +  \left(\frac{C_{12}}{{24 \choose 12}}\right)^{2} +  \left(\frac{C_{13}}{{26 \choose 13}}\right)^{2} +  \left(\frac{C_{14}}{{28 \choose 14}}\right)^{2} +  \left(\frac{C_{15}}{{30 \choose 15}}\right)^{2} \nonumber 
\end{eqnarray}
\begin{equation}
= \sum_{n = 1}^{16}\frac{1}{n^{2}}
\end{equation}
$$
= 1.5843\ldots \approx 1.6449\ldots = \frac{\pi^{2}}{6},
$$
where the Catalan numbers in the numerator for this approximating computation in (10) can be found at The Online Encyclopedia of Integer Sequences~\cite{Sloane} and where the central binomial coefficients in the denominators can be found from Table 24.1, page 828, provided by Abramowitz and Stegun~\cite{Abramowitz}. So with approximating by only sixteen terms we obtain a relative error of only 
$$
\varepsilon = \left|\frac{1.6449\ldots - 1.5843\ldots}{1.6449\ldots}\right| = 0.0368\ldots
$$
for a percent relative error of $3.68\%$.

\indent Now we compare the sixteen term approximation with Euler's product formula (~\cite{Ireland}, Proposition 16.1.1,~\cite{Edwards}, Section 1.2 and Section 1.9), with primes ranging from \(p = 2\) to \(p = 53\):
$$
\zeta(2) \approx \prod_{p = 2}^{p = 53}\left(1 - \frac{1}{p^{2}}\right)^{-1}
$$
\begin{equation}
= 1.6392\ldots
\end{equation}
This value (i. e., for sixteen terms) makes a percent relative error with \(\zeta(2) = \sum_{n = 1}^{\infty}\frac{1}{n^{2}}\) of $0.0035\%$ and a percent relative error for the sixteen terms from the series expansion of $3.349\%$. We can round off the decimal value up to the nearest tenth in the finite sum containing Catalan numbers, to obtain
\begin{eqnarray}
\zeta(2) & =       & \frac{\pi^{2}}{6} \nonumber \\
         & =       & \prod_{p}\left(1 - \frac{1}{p^{2}}\right)^{-1} \nonumber \\
         & \approx & \frac{1}{1^{2}} + \sum_{n = 1}^{15}\left(\frac{C_{n}}{{2n \choose n}}\right)^{2} \nonumber \\
\end{eqnarray}
\begin{equation}
= 1.6.  
\end{equation}  
\indent
Admittedly the infinite sum can have slow convergence for \(|s| > 1\). Nevertheless one can verify through computer programs (in MATLAB sessions or by PARI Calculator, for example) and for large $n$ that although there indeed is slow convergence it is very close to \(\pi^{2}/6 = 1.6449\dots\), when \(s = 2\). The infinite series of analytic functions
$$
\left(\frac{C_{n}}{{2n \choose n}}\right)^{s}
$$
actually converges uniformly on $\mathbb{C}$ by the Weierstrass--M test for some absolutely convergent infinite series of real numbers $M_{n}$, such that
$$
\left|\left(\frac{C_{n}}{{2n \choose n}}\right)^{s}\right| \leq M_{n}.
$$
For example when \(s > 1\) is an integer let
$$
M_{n} = \left(\frac{1}{n}\right)^{2}.
$$
For positive integers $n$ and for each integer \(s > 1\),
\begin{equation}
\left|\left(\frac{C_{n}}{{2n \choose n}}\right)^{s}\right| \leq \frac{1}{n^{2}},
\end{equation}
meaning the infinite series
$$
\sum_{n = 1}^{\infty}\left(\frac{C_{n}}{{2n \choose n}}\right)^{s} = \sum_{n = 1}^{\infty}\left(\frac{1}{n + 1}\right)^{s}
$$
converges uniformly on $\mathbb{C}$ for all integers \(s > 1\).   
\begin{center}
\begin{tabular}{|l                   |c                                              |c                        |c                                                  |r|}
\hline

                  $n$      &          ${2n \choose n}$                  &               $C_{n}$               &    $\frac{C_{n}}{{2n \choose n}}$ $=$      &        $=$ $n+1^{st}$ term of $\zeta(1)$, $n \geq 1$, $s = 1$  \\   
\hline
                  $1$      &          ${2 \choose 1}$   $=$         $2$     &                     $1$         &    $0.5000$  $=$                           &        $=$  $1/2$                                               \\
\hline                 
                  $2$      &          ${4 \choose 2}$   $=$         $6$     &                     $2$         &    $0.3333$  $=$                           &        $=$  $1/3$                                                \\
\hline                  
                  $3$      &          ${6 \choose 3}$   $=$        $20$     &                     $5$         &    $0.2500$  $=$                           &        $=$  $1/4$                                                 \\
\hline                   
                  $4$      &          ${8 \choose 4}$   $=$        $70$     &                    $14$         &    $0.2000$  $=$                           &        $=$  $1/5$                                                  \\
\hline                 
                  $5$      &          ${10 \choose 5}$  $=$       $252$     &                    $42$         &    $0.1666$  $=$                           &        $=$  $1/6$                                                   \\
\hline                   
                  $6$      &          ${12 \choose 6}$  $=$       $924$     &                   $132$         &    $0.1428$  $=$                           &        $=$  $1/7$                                                    \\
\hline                    
                  $7$      &          ${14 \choose 7}$  $=$      $3432$     &                   $429$         &    $0.1250$  $=$                           &        $=$  $1/8$                                                     \\
\hline                   
                  $8$      &          ${16 \choose 8}$  $=$     $12870$     &                  $1430$         &    $0.1111$  $=$                           &        $=$  $1/9$                                                      \\
\hline                    
                  $9$      &          ${18 \choose 9}$  $=$     $48620$     &                  $4862$         &    $0.1000$  $=$                           &        $=$ $1/10$                                                       \\
\hline                    
                  $10$     &          ${20 \choose 10}$ $=$    $184756$     &                 $16796$         &    $0.0909$  $=$                           &        $=$ $1/11$                                                        \\
\hline                   
                  $11$     &          ${22 \choose 11}$ $=$    $705432$     &                 $58786$         &    $0.0833$  $=$                           &        $=$ $1/12$                                                         \\  
\hline                   
                  $12$     &          ${24 \choose 12}$ $=$   $2704156$     &                $208012$         &    $0.0769$  $=$                           &        $=$ $1/13$                                                          \\
\hline                  
                  $13$     &          ${26 \choose 13}$ $=$  $10400600$     &                $742900$         &    $0.0714$  $=$                           &        $=$ $1/14$                                                           \\
\hline                  
                  $14$     &          ${28 \choose 14}$ $=$  $40116600$     &               $2674440$         &    $0.0666$  $=$                           &        $=$ $1/15$                                                            \\
\hline                  
                  $15$     &          ${30 \choose 15}$ $=$ $155117520$     &               $9694845$         &    $0.0625$  $=$                           &        $=$ $1/16$                                                             \\

\hline
\end{tabular}
\end{center}
\begin{center}
Table 3.
\end{center}
We can express Catalan numbers in terms of $\zeta(s)$ and the central binomial coefficient ${2n \choose n}$.

\noindent
\begin{proposition} 
For integers $n \geq 1$,
\begin{equation}
C_{n} = {2n \choose n}\left(\zeta(s) - \sum_{i = 1}^{n}\frac{1}{i^{s}} - \sum_{i = n + 2}^{\infty}\frac{1}{i^{s}}\right)^{1/s}.
\end{equation}
\label{prop2}
\end{proposition}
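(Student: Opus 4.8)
The plan is to observe that the parenthesized quantity in Eq.~(18) is nothing but the single term $1/(n+1)^{s}$ of the Dirichlet series for $\zeta(s)$, and then to undo the $s$-th power. First I would invoke Proposition~\ref{prop1}, or equivalently the defining series $\zeta(s) = \sum_{i=1}^{\infty} i^{-s}$ for $s$ in its region of convergence (e.g. $\mathcal{R}e(s) > 1$), and split it at the index $i = n+1$:
$$
\zeta(s) = \sum_{i=1}^{n} \frac{1}{i^{s}} + \frac{1}{(n+1)^{s}} + \sum_{i=n+2}^{\infty} \frac{1}{i^{s}}.
$$
The head sum is a finite sum of analytic functions and the tail converges on the same half-plane, so removing both from $\zeta(s)$ is legitimate and leaves exactly one term:
$$
\zeta(s) - \sum_{i=1}^{n}\frac{1}{i^{s}} - \sum_{i=n+2}^{\infty}\frac{1}{i^{s}} = \frac{1}{(n+1)^{s}}.
$$

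Second, I would take the principal $1/s$-th power of both sides. Because $n+1$ is a positive integer, the principal branch gives $\bigl((n+1)^{-s}\bigr)^{1/s} = (n+1)^{-1}$ unambiguously; the discussion following Proposition~\ref{prop1} about remaining on a single Riemann sheet is precisely what licenses this, since both the inner $s$-th power and the outer $1/s$-th power are read as principal values. Hence
$$
\left(\zeta(s) - \sum_{i=1}^{n}\frac{1}{i^{s}} - \sum_{i=n+2}^{\infty}\frac{1}{i^{s}}\right)^{1/s} = \frac{1}{n+1}.
$$
Third, I would multiply through by the central binomial coefficient ${2n \choose n}$ and apply the relation ${2n \choose n}/(n+1) = C_{n}$, i.e. Eq.~(6) from the proof of Proposition~\ref{prop1}, to obtain
$$
{2n \choose n}\left(\zeta(s) - \sum_{i=1}^{n}\frac{1}{i^{s}} - \sum_{i=n+2}^{\infty}\frac{1}{i^{s}}\right)^{1/s} = {2n \choose n}\cdot\frac{1}{n+1} = C_{n},
$$
which is the assertion.

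The only genuine subtlety — and the step I would be most careful about — is the passage to the $1/s$-th power: one needs $s$ restricted so the series converges and the finite/tail decomposition is valid, and one must interpret both powers as principal values so that the two operations really do cancel on the positive real number $1/(n+1)$. Everything else is bookkeeping. I would also remark that the identity is, in a sense, independent of $s$ on the right-hand side, since for every admissible $s$ the bracketed quantity collapses to $(n+1)^{-s}$; this is the reason the same closed form for $C_{n}$ holds across a countably infinite set of integer values of $s$, as anticipated in the surrounding discussion.
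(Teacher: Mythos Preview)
Your proof is correct and follows essentially the same route as the paper: isolate the single term $1/(n+1)^{s}$ from the Dirichlet series, take the $1/s$-th power, and then invoke $C_{n}=\binom{2n}{n}/(n+1)$. The only cosmetic difference is that the paper substitutes $C_{n}/\binom{2n}{n}$ for $1/(n+1)$ \emph{before} extracting the $1/s$-th root, whereas you do it after; your added remarks on principal values and convergence are more careful than, but entirely consistent with, the paper's argument.
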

\begin{proof}
\begin{equation}
\zeta(s) = \frac{1}{1^{s}} + \frac{1}{2^{s}} + \ldots + \frac{1}{n^{s}} + \frac{1}{(n + 1)^{s}} + \ldots
\end{equation}
$$
\Longrightarrow \left(\frac{1}{n + 1}\right)^{s} = \zeta(s) - \sum_{i = 1}^{n}\frac{1}{i^{s}} - \sum_{i = n + 2}^{\infty}\frac{1}{i^{s}}.
$$
Then from the definition of Catalan numbers we have through substitution
\begin{eqnarray}
\left(\frac{C_{n}}{{2n \choose n}}\right)^{s}  & = & \zeta(s) - \sum_{i = 1}^{n}\frac{1}{i^{s}} - \sum_{i = n + 2}^{\infty}\frac{1}{i^{s}} \nonumber \\
\Longrightarrow \frac{C_{n}}{{2n \choose n}}   & = & \left(\zeta(s) - \sum_{i = 1}^{n}\frac{1}{i^{s}} - \sum_{i = n + 2}^{\infty}\frac{1}{i^{s}}\right)^{1/s} \nonumber \\
\Longrightarrow C_{n}                          & = & {2n \choose n}\left(\zeta(s) - \sum_{i = 1}^{n}\frac{1}{i^{s}} - \sum_{i = n + 2}^{\infty}\frac{1}{i^{s}}\right)^{1/s}.
\end{eqnarray}
\end{proof}

\indent We can choose to define
\begin{equation}
C\left(n;\zeta(s);{2n \choose n}\right) := C_{n},
\end{equation}
whenever the $n^{th}$ Catalan number $C_{n}$ is expressed by the result in Proposition~\ref{prop2}.
\subsection{Catalan numbers, even Bernoulli Numbers, $\zeta(2n)$ and the Content of a Hypersphere in even Euclidean Dimension 4n}
In the following Subsection it is to be understood one represents the series expansion for $\zeta(s)$ as
$$
\zeta(s) = \sum_{k = 1}^{\infty}\frac{1}{k^{s}}, |s| > 1,
$$
for \(k = 1, 2, 3\ldots\), when \(s = 2n\), instead of as in previous Sections,
$$
\zeta(s) = \sum_{n = 1}^{\infty}\frac{1}{n^{s}}, |s| > 1.
$$
Here we calculate a closed formula by which one can compute Catalan numbers when \(s = 2n\), \(n = 1, 2, 3\dots\), by using $\zeta(2n)$ and Bernoulli numbers $B_{2n}$, when \(s = 2n\). A formula of significance for this can be found in the well known mathematical tome edited by M. Abramowitz and I. Stegun~\cite{Abramowitz} (See Eq. 23.2.16, Chapter 23, page 807):
\begin{equation}
\zeta(2n) = \frac{2^{2n - 1}\pi^{2n}}{(2n)!}|B_{2n}|.
\end{equation}
\noindent
\begin{proposition}
\emph{Let $C_{n}$ be the $n^{th}$ Catalan number. Then, for \(r_{n} \in \mathbb{R}\),}
\begin{eqnarray}
\zeta(2n)&=&\frac{\pi^{2n}r_{n}^{4n}}{n!(n + 1)!C_{n}},\nonumber \\
C_{n}&=&\frac{\pi^{2n}r_{n}^{4n}}{n!(n + 1)!\zeta(2n)},
\end{eqnarray}
\emph{where}
\begin{equation}
r_{n} = \sqrt[4n]{2^{2n - 1}|B_{2n}|}
\end{equation}
\emph{is the radius of a hypersphere in Euclidean dimension \(m = 4n\) and $B_{2n}$ is the $2n^{th}$ Bernoulli number. What is more, $\zeta(s)$ gives the content $V_{4n}$ of a hypersphere in Euclidean dimension \(m = 4n\) when \(s = m/2\).} \\ \noindent
\label{prop3}
\end{proposition}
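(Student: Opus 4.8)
The plan is to derive everything from the classical closed form for $\zeta(2n)$ in Eq.~(22) together with the elementary factorial identity for Catalan numbers. The one computational fact that does all the work is that
$$
C_{n} = \frac{{2n \choose n}}{n+1} = \frac{(2n)!}{n!\,n!\,(n+1)} = \frac{(2n)!}{n!\,(n+1)!},
$$
so that $(2n)! = n!\,(n+1)!\,C_{n}$. Substituting this denominator into Eq.~(22) gives at once
$$
\zeta(2n) = \frac{2^{2n-1}\pi^{2n}|B_{2n}|}{(2n)!} = \frac{2^{2n-1}\pi^{2n}|B_{2n}|}{n!\,(n+1)!\,C_{n}},
$$
and recognising $r_{n}^{4n} = 2^{2n-1}|B_{2n}|$ from the definition in Eq.~(24) yields the first displayed identity. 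The second displayed identity is then obtained simply by solving for $C_{n}$, which is legitimate because $\zeta(2n) \neq 0$ for every $n \geq 1$.

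Next I would give the geometric interpretation. First record that $2^{2n-1}|B_{2n}| > 0$ for all $n \geq 1$, since $B_{2n} \neq 0$; hence $r_{n}$ is a well-defined positive real number and may be taken as the radius of a Euclidean ball. Then invoke the standard formula for the content of the $m$-dimensional ball of radius $r$,
$$
V_{m}(r) = \frac{\pi^{m/2}}{\Gamma\!\left(\tfrac{m}{2}+1\right)}\,r^{m},
$$
and specialise to $m = 4n$, where $\tfrac{m}{2} = 2n$ and $\Gamma(2n+1) = (2n)! = n!\,(n+1)!\,C_{n}$, so that
$$
V_{4n}(r) = \frac{\pi^{2n}}{(2n)!}\,r^{4n} = \frac{\pi^{2n}\,r^{4n}}{n!\,(n+1)!\,C_{n}}.
$$
Comparing this with the first displayed identity shows that $V_{4n}(r_{n}) = \zeta(2n) = \zeta(m/2)$; equivalently, $r_{n}$ is precisely the radius for which the content of the $4n$-ball equals $\zeta(2n)$, which is the final assertion of the proposition (and explains why only dimensions $m \equiv 0 \bmod 4$ occur, namely $m = 4n$ with $s = m/2 = 2n$ even).

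Since the argument is essentially bookkeeping, the only place where genuine care is needed is in matching conventions: one must use the content (volume) normalisation $V_{m}(r) = \pi^{m/2} r^{m}/\Gamma(m/2+1)$ rather than a surface-area or unit-ball normalisation, and one must read $s = 2n$ consistently as $s = m/2$ with $m = 4n$. A secondary point worth stating is that the identity is purely formal in $r_{n}$: the choice of real $4n$-th root is immaterial because only $r_{n}^{4n}$ ever enters, and $r_{n}^{4n} = 2^{2n-1}|B_{2n}|$ is unambiguous. Once these conventions are pinned down, the three displayed computations above constitute the whole proof.
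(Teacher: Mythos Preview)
Your proof is correct and follows essentially the same route as the paper: both arguments rewrite $(2n)!$ as $n!\,(n+1)!\,C_{n}$, substitute into the classical Euler formula $\zeta(2n)=2^{2n-1}\pi^{2n}|B_{2n}|/(2n)!$, identify $r_{n}^{4n}=2^{2n-1}|B_{2n}|$, and then compare with the even-dimensional ball-volume formula $V_{m}=\pi^{m/2}r^{m}/(m/2)!$ at $m=4n$ to obtain $V_{4n}(r_{n})=\zeta(2n)$. Your version is slightly more careful in justifying that $\zeta(2n)\neq 0$ and $B_{2n}\neq 0$ so that the division and the real $4n$-th root are legitimate, but the underlying argument is the same.
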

\begin{proof}
In Euclidean space of even dimension $m$, the content of a hypersphere of radius $r$ is~\cite{Apostol}
\begin{equation}
V_{m} = \frac{\pi^{m/2}r^{m}}{(m/2)!}.
\end{equation}
then replace $r$ with $r_{n}$, let \(m = 4n\), \(r_{n}^{4n} = 2^{2n - 1}|B_{2n}|\). Then from Eq. (20),
\begin{eqnarray}
\zeta(2n)&=&\frac{2^{2n - 1}\pi^{2n}}{(2n)!}|B_{2n}|\nonumber \\
         &=&\frac{\pi^{2n}r_{n}^{4n}}{(2n)!}\nonumber \\
         &=&\frac{\pi^{2n}r_{n}^{4n}}{n!(n + 1)!C_{n}}.
\end{eqnarray}
Similarly, we can rearrange Eq. (24) to obtain
\begin{equation}
C_{n} = \frac{\pi^{2n}r_{n}^{4n}}{n!(n + 1)!\zeta(2n)}.
\end{equation}
Now to complete the proof one need only compare Eq. (20) to Eq. (23) to see they always obtain the same value when \(s = 2n\), \(m = 4n\) and when
$$
r = r_{n} = \sqrt[4n]{2^{2n - 1}|B_{2n}|},
$$
that is, \(V_{4n} = \zeta(2n)\), for Euclidean dimension \(m = 4n\) for given radius $r_{n}$.
\end{proof}
When \(s = 2n\), \(n = 1, 2, 3\ldots\) there is practical advantage in using Proposition 3 over Proposition 1 to compute $\zeta(2n)$. Certainly this is true even with machine computations. For instance suppose one uses an M--file however short to compute $\zeta(2n)$ in a MATLAB session, using the two infinite series that appear in Eqtns. (4)--(5) in Proposition 1. The two infinite series would have to be summed term by term by machine for large $n$. However if one has no access either to high speed computers or to computer algebra software packages (e.g., MATLAB, MATHEMATICA, PARI or Maxima), one can use a hand calculator to compute $\zeta(2n)$, using the closed formula in Proposition 3 which contains always the $n^{th}$ Catalan number for each $n$. This means in the first closed formula in Eq. (21) one actually can make computations of $\zeta(2n)$ with a hand held calculator (e. g., a Texas Instrument TI--30XA) at least for values of $n$ where \(C_{n} = \frac{{2n \choose n}}{n + 1} = O(10^{9})\).

\indent So we have derived a closed formula to compute $\zeta(2n)$ using Catalan numbers $C_{n}$ (Eq. (24)) and a closed formula for $C_{n}$ using $\zeta(2n)$ (Eq. (25)). What is more we have established that (See Eqtns. (20)--(25))
\begin{eqnarray}
V_{4n}&=&\frac{\pi^{2n}r_{n}^{4n}}{(2n)!}\nonumber \\
      &=&\frac{\pi^{2n}r_{n}^{4n}}{n!(n + 1)!C_{n}} = \zeta(2n).
\end{eqnarray}
That is, in Euclidean space of even dimension \(m = 4n\) and when \(s = 2n\), $\zeta(2n)$ actually is the content of a hypersphere of radius 
$$
r_{n} = \sqrt[4n]{2^{2n - 1}|B_{2n}|}, 
$$
where $B_{2n}$ is the $2n^{th}$ Bernoulli number (Compare Eq. (23)). Results appear in the Table for \(1 \leq n \leq 10\). The author chose \(\pi = 3.14159\ldots\) using the first five figures to the right of the decimal point for the computation of $\pi^{2n}$, \(1 \leq n \leq 10\). The values for the even Bernoulli numbers used for the computed results for the hypersphere radius $r_{n}$ (See Eq. (22)) that appear in Table 2 are found in the written compilation by editors Abramowitz and Stegun~\cite{Abramowitz} (Table 23.2, Chapter 23, page 810). The values for $r_{n}$, $V_{4n}$ and $\zeta(2n)$ were determined from using Eqtns. (22)--(24) and by using PARI GP Calculator in an MSDOS command window along with the PARI functions \textbf{zeta()} and \textbf{factorial()}. In Table 2 when $6 \leq n \leq 10$ the data for $V_{4n}$ are bounded above by one while the data for $\zeta(2n)$ are bounded below by one.

\pagebreak

\begin{center}
\begin{tabular}{|l                 |c                       |c                                                                              |r|}
\hline
                   $n$     &       $r_{n}$           &       $V_{4n}$               &        $\zeta(2n)$        \\   
\hline
                   $1$     &       $0.75983\ldots$   &       $1.64480\ldots$        &        $1.64493\ldots$     \\
 
                   $2$     &       $0.84770\ldots$   &       $1.0822416\ldots$      &        $1.08232\ldots$     \\
                  
                   $3$     &       $0.97759\ldots$   &       $1.0172917\ldots$      &        $1.01734\ldots$      \\ 

                   $4$     &       $1.09491\ldots$   &       $1.0039941\ldots$      &        $1.00407\ldots$      \\
                  
                   $5$     &       $1.20070\ldots$   &       $1.0009727\ldots$      &        $1.00099\ldots$      \\
                  
                   $6$     &       $1.29745\ldots$   &       $0.9991535\ldots$      &        $1.00024\ldots$       \\
 
                   $7$     &       $1.38719\ldots$   &       $0.9987764\ldots$      &         $1.00006\ldots$       \\
                    
                   $8$     &       $1.47127\ldots$   &       $0.9998357\ldots$      &        $1.0000152\ldots$      \\ 

                   $9$     &       $1.55037\ldots$   &       $0.9954579\ldots$      &        $1.0000038\ldots$      \\
 
                  $10$     &       $1.62584\ldots$   &       $0.9999485\ldots$      &        $1.00000095\ldots$    \\

\hline
\end{tabular}
\end{center}
\begin{center}
Table 2.
\end{center}

\indent In Euclidean $4n$ dimensions let the closed disk $D$
$$
x_{1}^{2} + x_{2}^{2} + \cdots + x_{4n}^{2} \leq r_{n}^{2},
$$
$$
(x_{1}, x_{2}, \ldots, x_{4n}) \in \mathbb{R}^{4n} \subseteq \mathbb{R}^{4n + 1},
$$
denote the hypersphere of radius $r_{n}$ and content \(V_{4n} = \zeta(2n)\) (See Proposition 3). This equation for the hypersphere is a quadratic form. In the vector space $\mathbb{R}^{4n}$ it has group actions from symplectic group $Sp(4n, \mathbb{R})$, which also has group actions on
$$
x_{1}^{2} + x_{2}^{2} + \cdots + x_{4n}^{2} \leq 1.
$$
So in $\mathbb{R}^{4n}$ the former hypersphere of radius $r_{n}$ can be derived by group actions on the latter hypersphere of unit radius, and vice versa. So if in fact we allow for some dimensional units \(r_{n}^{2} \cdot 1/r_{n}^{2} = 1\), we obtain an identification between both spheres. The manifold $S^{4n}$ itself is associated with the complex manifold $\mathbb{C}\mathbb{P}^{2n}$ through the Hopf fibration~\cite{Baez},~\cite{Madsen} and where \(S^{2(2n) - 1} \subseteq S^{2(2n)} \subseteq S^{2(2n) + 1}\)~\cite{Madsen} (Chapter 14), for which there is a generalized Hopf fibration map 
$$
S^{2(2n) + 1} \mapsto \mathbb{C}\mathbb{P}^{2n}.
$$
When it is imbedded in Euclidean space $\mathbb{R}^{4n}$ its manifold $\partial(D)$ is homeomorphic to $S^{4n - 1}$ which is a submanifold of $S^{4n}$. What one has is the compactification of $S^{4n - 1}$ in $\mathbb{R}^{4n}$ and the compactification of $S^{4n}$ in $\mathbb{R}^{4n + 1}$, where $S^{4n - 1} \subseteq$ $S^{4n}$ $\subseteq$ $\mathbb{R}^{4n + 1}$. Therefore for the construction of charts and atlases the geometer must consider either the submanifold $S^{4n - 1}$ of the topological manifold $S^{4n}$ or else this manifold $S^{4n}$ in the imbedding space $\mathbb{R}^{4n + 1}$. Let $SZ(4n, \mathbb{R})$ denote the center subgroup of unimodular~\cite{Rotman} (See page 20) scalar matrix representations of $SL(4n, \mathbb{R})$. The unit hypersphere \(S^{4n - 1} \subseteq S^{4n}\) has isometry group actions from \(SO(4n, \mathbb{R}) \subseteq SL(4n, \mathbb{R})\) (where the origin remains fixed) and where up to isomorphism, 
$$
PSL(4n, \mathbb{R}) = SL(4n, \mathbb{R})/SZ(4n, \mathbb{R}).
$$
\indent For an abelian chain complex $\oplus_{4n - 1}$$K_{4n - 1}$ the submanifold $S^{4n - 1}$ has homology group \(H_{l}(S^{4n - 1}, \mathbb{Z}) = \mathbb{Z}\) for \(l = 0\) or $4n - 1$, $\mathbb{Z}_{2}$ if \(0 < l < 4n - 1\) for odd $l$ and $0$ otherwise~\cite{May},~\cite{Madsen}. The manifold $\mathbb{C}\mathbb{P}^{2n}$ has deRham cohomology group

\[ H^{l}(\mathbb{C}\mathbb{P}^{2n}) = \left\{ \begin{array}{ll}
                                                \mathbb{R}^{1} & \mbox{if \(l = 2j \ni 0 \leq l/2 \leq 2n\),} \\
                                                 0             & \mbox{otherwise.}
                                              \end{array}
                                      \right\} \]
The Hopf manifold $S^{4n + 1}/S^{1}$ is homeomorphic to $\mathbb{C}\mathbb{P}^{2n}$~\cite{Madsen} (Chapter 14). A smooth complex projective manifold, in particular when it is simply--connected for some universal covering map, allows on it the construction of an exterior algebra by which quantum gravity theorists design various mathematical models of nature. One example of a model is the twistor space on the complex projective manifold $\mathbb{C}\mathbb{P}^{3}$~\cite{Wald} (See Chapter 14, Section 14.1). 

\indent Let \(n = 1\) or $2$. For \(n = 1\) and using Eqtns. (20)--(25), \(V_{4} =\)
\begin{eqnarray}
\zeta(2)&=&\frac{\pi^{2}r_{1}^{4}}{1!2!C_{1}} = \frac{2|B_{2}|\pi^{2}}{(1!)(2!)(1)}\nonumber \\
        &=&\frac{\pi^{2}}{6} = 1.6449\ldots \nonumber
\end{eqnarray}
where \(B_{2} = 1/6\), \(r_{1} = \sqrt[4]{2|B_{2}|}\). This is the same result we obtained in Section 3 through other means (i. e., by Proposition 1). From the topological equivalence between $\partial(D)$ and $S^{4n - 1}$, where $\partial(D)$ is the boundary of $S^{4n}$,
\begin{eqnarray}
\partial(D)&\cong&S^{3}\subseteq S^{4}\nonumber \\
           &\cong&\mathbb{H}\mathbb{P}^{1},\nonumber
\end{eqnarray}
where $\mathbb{H}$ is the normed, noncommutative division algebra of quaternions. For \(n = 2\),           
\begin{eqnarray}
V_{8}&=&\zeta(4)\nonumber \\
     &=&\frac{\pi^{4}r_{2}^{8}}{2!3!C_{2}},\nonumber
\end{eqnarray}
and
\begin{eqnarray}
\partial(D)&\cong&S^{7} \subseteq  S^{8}\nonumber \\
           &\cong&\mathbb{O}\mathbb{P}^{1},\nonumber
\end{eqnarray}
\mbox{where $\mathbb{O}$ is the normed, noncommutative and nonassociative division algebra of octonions}~\cite{Baez},~\cite{Crowell et Betts}. Let \(a \in S^{4}, b \in S^{8}\), with $U_{a}$, $U_{b}$ the corresponding open neighborhoods. The corresponding Hopf bundle maps are~\cite{Madsen}
\begin{eqnarray}
S^{3}&\mapsto S^{7} \mapsto S^{4},\nonumber \\
S^{7}&\mapsto S^{15} \mapsto S^{8},\nonumber
\end{eqnarray}
with two local trivializations
\begin{eqnarray}
h_{1}:U_{a} \times S^{3}&\rightarrow \pi_{a}^{-1}(U_{a}),\nonumber \\
h_{2}:U_{b} \times S^{7}&\rightarrow \pi_{b}^{-1}(U_{b}), \nonumber
\end{eqnarray}
for the two bundle maps
\begin{eqnarray}
\pi_{a}:S^{7}&\rightarrow S^{4},\nonumber \\
\pi_{b}:S^{15}&\rightarrow S^{8}.\nonumber
\end{eqnarray}
The two composition maps defined from the lifts are \(\pi_{a} \circ h_{1}\) and \(\pi_{b} \circ h_{2}\). 

\indent The following can be regarded as being a corollary to Proposition 3, namely that when \(m = 4n\), \(s = 2n\) and when $r_{n}$ is as defined in Proposition 3,
\begin{eqnarray}
{2n \choose n}&=&\frac{\pi^{2n}r_{n}^{4n}}{(n!)^{2}\zeta(2n)},\nonumber \\
\zeta(2n)     &=&\frac{\pi^{2n}r_{n}^{4n}}{(n!)^{2}{2n \choose n}},\nonumber
\end{eqnarray}
These two formulas derive immediately from Eq. (25), since
\begin{eqnarray}
C_{n}         &=&\frac{\pi^{2n}r_{n}^{4n}}{n!(n + 1)!\zeta(2n)},\nonumber \\
{2n \choose n}&=&\frac{\pi^{2n}(n + 1)r_{n}^{4n}}{n!(n + 1)!\zeta(2n)},\nonumber \\
\end{eqnarray}
From which follow the two results. We have justified our claim that there exists a combinatorial relationship between $\zeta(s)$, $C_{n}$ and ${2n \choose n}$. As these results show, the combinatorial relationship which exists through finding the content $V_{4n}$ of a $4n$--dimensional hypersphere $D$ with the radius $r_{n}$ and where \(\partial(D) = S^{4n - 1}\) (through topological equivalence), exists independently of our infinite series of ratios
$$
\left(\frac{C_{n}}{{2n \choose n}}\right)^{s},
$$
on the complex plane.
\noindent
\begin{proposition}
\emph{In Euclidean space of even dimension \(m = 4n\), \(n = 1, 2, 3\ldots\)}
$$
V_{4n} = \prod_{p}\left(1 - \frac{1}{p^{2n}}\right)^{-1}.
$$
\label{prop4}
\end{proposition}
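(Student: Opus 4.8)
The plan is to combine Proposition~\ref{prop3} with Euler's classical product formula for the Riemann zeta function. Proposition~\ref{prop3} already establishes that in Euclidean dimension $m = 4n$, with radius $r_{n} = \sqrt[4n]{2^{2n-1}|B_{2n}|}$, one has $V_{4n} = \zeta(2n)$. Hence it suffices to prove the single identity $\zeta(2n) = \prod_{p}\left(1 - p^{-2n}\right)^{-1}$, after which the result follows by chaining the two equalities.

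First I would recall Euler's product formula, already invoked in Section~3 of this paper (see~\cite{Ireland}, Proposition 16.1.1, and~\cite{Edwards}, Section 1.2 and Section 1.9): for every $s \in \mathbb{C}$ with $\mathcal{R}e(s) > 1$,
$$
\zeta(s) = \sum_{k = 1}^{\infty}\frac{1}{k^{s}} = \prod_{p}\left(1 - \frac{1}{p^{s}}\right)^{-1},
$$
the product ranging over all primes $p$, the infinite product converging absolutely because $\sum_{p} p^{-s}$ converges absolutely whenever $\mathcal{R}e(s) > 1$. Next, since $n \geq 1$ we have $s = 2n \geq 2 > 1$, so the hypothesis $\mathcal{R}e(s) > 1$ is satisfied at $s = 2n$, and Euler's formula specializes to $\zeta(2n) = \prod_{p}\left(1 - p^{-2n}\right)^{-1}$.

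Finally, I would conclude by substituting $\zeta(2n) = V_{4n}$ from Proposition~\ref{prop3}, obtaining
$$
V_{4n} = \zeta(2n) = \prod_{p}\left(1 - \frac{1}{p^{2n}}\right)^{-1},
$$
which is the assertion. The proof is essentially a one-line composition; the only point deserving a remark — and it is hardly an obstacle — is verifying that the convergence condition $\mathcal{R}e(s) > 1$ for Euler's product is met, which is immediate from $n \geq 1$. As a consistency check one may note that the $n = 1$ entry of Table~2 already reproduces the prime-product value $1.6449\ldots$ for $\zeta(2) = V_{4}$ computed in Section~3.
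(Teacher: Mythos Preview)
Your proposal is correct and follows essentially the same route as the paper's own proof: invoke Proposition~\ref{prop3} to get $V_{4n}=\zeta(2n)$, then apply Euler's product formula at $s=2n$. The only addition you make is the explicit check that $\mathcal{R}e(s)=2n>1$ and the numerical consistency remark, neither of which appears in the paper but neither of which changes the argument.
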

\begin{proof}
This follows directly from the fact that
$$
\zeta(2n) = \prod_{p}\left(1 - \frac{1}{p^{2n}}\right)^{-1},
$$
and from the results derived in the proof to Proposition 3, including Eq. (20), Eq. (24) and from the fact that \(\zeta(2n) = V_{4n}\) when \(m = 4n\), \(r_{n} = \sqrt[4n]{2^{2n - 1}|B_{2n}|}\).
\end{proof}
We now have these five results, namely:
\begin{eqnarray}
\zeta(s)      &=&\sum_{n = 1}^{\infty}\frac{1}{n^{s}} \nonumber \\
              &=&\left(\frac{C_{0}}{1}\right)^{s} + \sum_{n = 1}^{\infty}\left(\frac{C_{n}}{{2n \choose n}}\right)^{s},\nonumber \\
              &=&\prod_{p}\frac{1}{\left(1 - \frac{1}{p^{s}}\right)}, \nonumber \\
C_{n}         &=&\frac{\pi^{2n}r_{n}^{4n}}{n!(n + 1)!\zeta(2n)},\nonumber \\
V_{4n}        &=&\frac{\pi^{2n}r_{n}^{4n}}{n!(n + 1)!C_{n}} = \zeta(2n).\nonumber \\
{2n \choose n}&=&\frac{\pi^{2n}r_{n}^{4n}}{(n!)^{2}\zeta(2n)},\nonumber \\
\zeta(2n)     &=&\frac{\pi^{2n}r_{n}^{4n}}{(n!)^{2}{2n \choose n}}.
\end{eqnarray}

\indent How do these results relate to the Riemann hypothesis? At first glance one might think that the values \(s = 2n = m/2\) bear little or no relation to values on the vertical line \(\mathcal{R}e(s) = 1/2\) on $\mathbb{C}$,  where \(m \equiv 0 mod 4\) is an even Euclidean dimension for the hypersphere of content $V_{4n}$ and radius $r_{n}$. We show this is not the case. Let
$$
z_{j} \in \mathbb{C}, j = 1, 2, \ldots, 12, \: a_{1}, a_{2}, a_{3}, a_{4} \in \mathbb{C},
$$
\begin{eqnarray}
z_{1}z_{4} - z_{3}z_{2} \not = 0,   z_{5}z_{8} - z_{7}z_{6}  &\not = 0, \nonumber \\
z_{9}z_{12} - z_{11}z_{10} \not = 0, a_{1}a_{4} - a_{3}a_{2} &\not= 0.\nonumber
\end{eqnarray}
Then we have conformal transformations for three constant complex numbers $s_{0}, s_{1}, s_{2}$, in $\mathcal{C}$,
\begin{eqnarray}
s_{0}&\mapsto \frac{z_{1}s_{0} + z_{2}}{z_{3}s_{0} + z_{4}} = w_{0},\nonumber \\
s_{1}&\mapsto \frac{z_{5}s_{1} + z_{6}}{z_{7}s_{1} + z_{8}} = w_{1},\nonumber \\
s_{2}&\mapsto \frac{z_{9}s_{2} + z_{10}}{z_{11}s_{2} + z_{12}} = w_{2}.\nonumber \\
\end{eqnarray}
This means the points $s_{0} = 2n$, $s_{1} = 0$ and $s_{2} = i$ in the upper complex half plane can be mapped to the three corresponding points \(w_{0} = 1/2 + 2ni, w_{1} = 1/2 + 0i\) and to some third point \(w_{2} \in \mathbb{C}\) all inside the vertical strip \(0 \leq \mathcal{R}e(s) \leq 1\). For all \(|s| > 1\), restrict the values of $\zeta(s)$ to the first Riemann sheet. Then $\zeta(s)$ is both single--valued and analytic for all \(|s| > 1\). Then we can establish a fourth conformal mapping~\cite{Dettman} (See Theorem 6.4.1, Corollary 6,4,1 in Chapter 6, Section 6.4, pages 253--254) 
$$
\zeta(2n) \mapsto \zeta\left(\frac{1}{2} + 2ni\right),
$$
by the M\"{o}bius transformation
\begin{eqnarray}
\zeta\left(\frac{1}{2} + 2ni\right)&=&\zeta\left(\frac{2nz_{1} + z_{2}}{2nz_{3} + z_{4}}\right) \nonumber \\
                                   &=&\frac{a_{1}\zeta(2n) + a_{2}}{a_{3}\zeta(2n) + a_{4}},\nonumber \\
\end{eqnarray}
$$
z_{1}z_{4} - z_{3}z_{2} \not = 0, \: a_{1}a_{4} - a_{3}a_{2} \not = 0,
$$
where the complex numbers $1/2 + 2ni$, \(n = 1, 2, 3\ldots\), may or may not be zeroes of $\zeta(s)$. However at the very least if these numbers do not appear as zeroes, the map
$$
2n \mapsto \frac{1}{2} + 2ni, \: n = 1, 2\ldots,
$$
indicates where on the vertical line \(\mathcal{R}e(s) = 1/2\) some actual root $1/2 + t_{0}i$ (such that \(\zeta(1/2 + t_{0}i) = 0\)) might be, meaning whenever \(\mathcal{I}m(s) = t_{0} \approx 2n\).
\section{Catalan Numbers, $\zeta(s)$ and their Relevance to certain Problems in Physics}
\subsection{The Weil Zeta Function and Quantum Chaos}
We have proved relationships exist between the numbers $\zeta(s), C_{n}, {2n \choose n}$ and $V_{4n}$, in particular when \(s = 2n\) and when \(m \equiv 0 mod 4\) is an even Euclidean dimension. Trivial zeroes of $\zeta(2n)$ occur for integer \(n < 0\), since for this case $\zeta(s)$ can be expressed in terms that contain odd Bernoulli numbers~\cite{Edwards} (Chapter 1). In addition as we shall show in Section 5, if the Riemann Hypothesis is true, then roots $1/2 + ti$ such that \(\zeta(1/2 + ti) = 0\) within the vertical strip \(0 \leq \mathcal{R}e(s) \leq 1\) have a definite impact on values for the zeroes to the infinite series in Proposition 1. A correct proof of the Riemann Hypothesis (RH) would indicate not only that the nontrivial zeroes of the traditional Riemann zeta function 
$$
\zeta(s) = \sum_{n = 1}^{\infty}\frac{1}{n^{s}}
$$ 
lie on the line $Re(s) = 1/2$ in the complex plane, but also would relate to the actual distribution of the primes~\cite{Ireland} (Proposition 16.1.1, Chapter 16, Section 1). In fact Ruelle~\cite{Ruelle} has described $\zeta(s)$ as being a generating function for the prime numbers when it is identified with the Euler product. For nonnegative integer $m$ and for an iterative dynamical map $f$ in some Hamiltonian physical system, there exists a \emph{Weil zeta function}
$$
\zeta_{W}(Z) := exp \sum_{m = 1}^{\infty}\frac{Z^{m}}{m}|Fixf^{m}|,
$$
where $|Fixf^{m}|$ is the number of fixed points (i. e., points of stability) for the iterative map in the phase space $M$. Here the iterative map $f$ for a dynamical system $(M, f)$ actually is a Frobenius map $Z \mapsto Z^{q}$, where $q$ is the order of finite field $\mathbb{F}_{q}$. This latter field is the algebraic closure for an algebraic variety the extension of which is $\mathbb{F}_{q}$ to obtain the space $M$~\cite{Ruelle}. For smooth ($C^\infty$) manifolds $M$ the map $f$ can be a diffeomorphism. A. Terras has discussed how quantum theorists have found there is some relationship between the Riemann zeta function, the distribution of prime numbers and the \emph{level spacings} \(|a_{k} - a_{k - 1}|\) for certain histograms studied in quantum chaos~\cite{Terras},~\cite{Ruelle} (page 889). A level spacing is the absolute value of the difference between each two successive energy levels appearing in the histogram for real numbers $a_{k - 1}$, $a_{k}$ found from the discrete energy spectrum (eigenvalues) for the random matrices in the two unitary groups $U(n)$, $SU(n)$~\cite{Kowalski},~\cite{Mehta},~\cite{Schwabl},~\cite{Terras}. 

\indent These random matrices~\cite{Fulman} are square matrices with elements that are Gaussian distributed random variables. This indicates their relevance in quantum chaos~\cite{Mehta}. A. Terras has compared the level spacings in the energy spectrum for the heavy nucleus for Erbium (element number sixty-eight in the Periodic Table of the Elements) with complex zeroes numbers $1551$ to $1600$ for $\zeta(s)$ (~\cite{Terras}. Compare columns (c) and (e) in Figure 2, page 124). Physicist Freeman Dyson has done considerable research on random matrices [17]. The Riemann zeta function also has relevance in this field of physics~\cite{Peterson}. In fact there is good reason to believe that, if the Riemann Hypothesis is true (the current status of research on the RH can be found at: \texttt{www.claymath.org}) , then the eigenvalues of random matrices bear some important relationship to the successive gaps between the zeroes of $\zeta(s)$ (~\cite{Terras}, Section 6.5 and Table IV. See also Section 6.6) on the line 
\(\mathcal{R}e(s) = 1/2\). It stands to reason that if ever the two successive zeroes $1/2 + it_{k}$ and $1/2 + it_{k + 1}$, are such that $t_{k}$ and \(t_{k + 1} > t_{k}\) are successive prime numbers, that
$$
\left(\frac{1}{2} + it_{k + 1}\right) - \left(\frac{1}{2} + it_{k}\right)
$$
$$
= i\Delta \! t_{k},
$$
where \(\Delta \! t_{k} - 1 = t_{k + 1} - t_{k} - 1\) is a prime gap~\cite{Pomerance},~\cite{Mozzochi},~\cite{Nicely},~\cite{Betts}. 

\indent If random matrices do give information on the location of gaps between the successive zeroes of $\zeta(s)$, then they also give information on the distribution of prime gaps on the line \(\mathcal{R}e(s) = 1/2\), since we have
\begin{equation}
\zeta(s) = \prod_{p}\frac{1}{\left(1 - \frac{1}{p^{s}}\right)}.
\end{equation}
Then the numbers $\zeta(2n)$, $V_{4n}$ and $C_{n}$ help also to give information about these same primes, due to Eqtns. (25)--(26) and by Proposition 4 (See Section 3.1). 
\subsection{The Casimir Effect and Catalan number $C_{3}$}
The Casimir effect involves the presence of vacuum state energy between two small metal plates in the laboratory separated by a small distance $d$. Such energy in fact has been detected and measured in the laboratory. One computational result with the expected value $<E>$ for energy is
\begin{equation}
\frac{<E>}{A} = - \frac{\hbar c \pi^{2}}{6d^{2}}\zeta(-3),
\end{equation}
where $\hbar$ is Planck's constant divided by $2\pi$, $c$ is the speed of light and $A$ is the area of the plates. Since one knows that
\begin{equation}
\zeta(-n) = -\frac{B_{n + 1}}{n + 1},
\end{equation}
and since
\begin{equation}
\frac{C_{n}}{{2n \choose n}} = \frac{1}{n + 1},
\end{equation}
we obtain at once for Eqtn. (32),
\begin{equation}
\frac{<E>}{A} = \frac{\hbar c \pi^{2}}{6d^{2}}\frac{B_{4}C_{3}}{{6 \choose 3}}.
\end{equation}
\section{Summary}
We summarize briefly our results:
\begin{enumerate}
\item The Riemann zeta function has a combinatorial description (See also the remarks at the end of Section 3.1), namely:
\begin{equation}
\zeta(s) = \left(\frac{C_{0}}{1}\right)^{s} + \sum_{n = 1}^{\infty}\left(\frac{C_{n}}{{2n \choose n}}\right)^{s},
\end{equation}
given as an infinite sum of ratios made from Catalan numbers $C_{n}$ and central binomial coefficients ${2n \choose n}$. This infinite series, in its own right, is a uniformly convergent series of analytic functions on $\mathbb{C}$.
\item There exist two closed formulas, namely
\begin{eqnarray}
\zeta(2n)&=&\frac{\pi^{2n}r_{n}^{4n}}{n!(n + 1)!C_{n}},\\
C_{n}    &=&\frac{\pi^{2n}r_{n}^{4n}}{n!(n + 1)!\zeta(2n)}, 
\end{eqnarray}
by which one can compute $\zeta(2n)$ by using $C_{n}$ and compute $C_{n}$ by using $\zeta(2n)$, where $\zeta(2n)$ is actually equal to the content $V_{m}$ of a hypersphere of radius $r_{n} = \sqrt[4n]{2^{2n - 1}|B_{2n}|}$ imbedded in Euclidean space of even dimension \(m = 4n\), where $B_{2n}$ is the $2n^{th}$ Bernoulli number.
\item The intrinsic properties of the four numbers $\zeta(s)$, $C_{n}$, ${2n \choose n}$ and $V_{4n}$, relate them together when \(s = 2n\), so that $\zeta(2n)$ has a combinatorial relationship, namely: 
\begin{eqnarray}
{2n \choose n}&=&\frac{\pi^{2n}r_{n}^{4n}}{(n!)^{2}\zeta(2n)},\\
\zeta(2n)     &=&\frac{\pi^{2n}r_{n}^{4n}}{(n!)^{2}{2n \choose n}},
\end{eqnarray}
when \(m = 4n\) is the Euclidean dimension for a hypersphere with radius $r_{n}$ and content \(V_{4n} = \zeta(2n)\), as we found at the end of Section 3.1. So when \(s = 2n\) we also obtain
\begin{equation}
V_{4n} = \prod_{p}\frac{1}{1 - \frac{1}{p^{s}}},
\end{equation}
where $V_{4n}$ is the content of a $4n$-dimensional hypersphere of radius \(r_{4n} = [2^{2n - 1}|B_{2n}|]^{1/4n}\), which is homeomorphic to the unit $4n$-dimensional hypersphere.\\
\item If the RH is true, the distribution of primes, related to 
\begin{equation}
\zeta(s) = \prod_{p}\frac{1}{1 - \frac{1}{p^{s}}},
\end{equation}
has some kind of combinatorial relationship, since
\begin{eqnarray}
 &\left(\frac{C_{0}}{1}\right)^{s} + \sum_{n = 1}^{\infty}\left(\frac{C_{n}}{{2n \choose n}}\right)^{s} = \prod_{p}\frac{1}{1 - \frac{1}{p^{s}}}\nonumber \\
 &\Longrightarrow \sum_{n = 1}^{\infty}\left(\frac{C_{n}}{{2n \choose n}}\right)^{s} = \prod_{p}\frac{1}{1 - \frac{1}{p^{s}}} - 1,\nonumber \\
 &C_{0} = 1.
\end{eqnarray}
\end{enumerate}

\pagebreak
\begin{center}
ACKNOWLEDGEMENTS
\end{center}
\noindent I wish to thank my ``de facto" mathematical physics mentor (from 2002-2005) Doctor L. Crowell (University of Washington Physics Email List, 2001-2004), for some suggestions on improving the discussion in Section 1. I also wish to thank physicists Gerald Fitzpatrick and Bob Zannelli on the List, for their illuminating physics list discussions that occasionally touched on the Casimir effect.

\pagebreak

\end{document}